\newtheorem{theorem}{Theorem}[section]
\newtheorem{lemma}[theorem]{Lemma}
\theoremstyle{definition}
\theoremstyle{remark}
\newtheorem{remark}[theorem]{Remark}
\numberwithin{equation}{section}
\begin{document}

\title{Non-local logistic equations from the probability viewpoint}

\author{Mirko D'Ovidio}
\address{Department of Basic and applied Sciences for Engineering,  Sapienza University of Rome, Italy }
\email{mirko.dovidio@uniroma1.it}
\thanks{The author was supported in part by INDAM-GNAMPA and the Grant Ateneo "Sapienza 2019".}

\subjclass[2020]{Primary 60H30; 26A33; Secondary 30L30; 11B68 }

\keywords{Logistic equations, non-local operators, subordinators}

\begin{abstract}
We investigate the solution to the logistic equation involving non-local operators in time. In the linear case such operators lead to the well-known theory of time changes. We provide the probabilistic representation for the non-linear logistic equation with non-local operators in time. The so-called fractional logistic equation has been investigated by many researchers, the problem to find the explicit representation of the solution on the whole real line is still open. In our recent work the solution on compact sets has been written in terms of Euler's numbers.
\end{abstract}

\maketitle

\section{Introduction}

The study of the logistic and fractional logistic growth has attracted many researchers because of the high impact in the applied sciences. Here we bring to the reader's attention some of the recently appeared papers, \cite{AN2021,DCDC2021,IzaSri2020,KahPhaJam2020,KumRaj2020} in which the role of the logistic equation has been investigated and discussed. Concerning the the stochastic interpretation and representation, a comparison with other models of growth has been given recently in \cite{DCP}. Many other works are devoted to the logistic SDEs. The literature is huge, we mention only few works here and further on in the presentation of the results.  

In this short note, we discuss some aspects concerning logistic equations and random time changes. The fractional logistic equation has been investigated by many researchers in the last decade. However, the solutions has been obtained only recently in \cite{PhysA}. Many similar problems have been considered in order to find solutions sharing some peculiar properties with the solution to the fractional logistic equation. A discussion on this point has been given in \cite{FCAAlog} and the references therein.  

We consider non-local operators more general than the Caputo-Djrbashian fractional derivative. Then, we introduce non-local logistic equations and discuss the probabilistic representation of the solutions in terms of inverses to subordinators. Thus, we use stochastic processes driven by non-local partial differential equations in order to solve non-local logistic equations.

Our presentation is based on two cases concerning respectively the logistic equation on the real line and the logistic equation on compact sets.

\section{Preliminaries on fractional Calculus and probability}
Let $H=\{H_t\, , \, t \geq 0\}$ be a subordinator (see \cite{BerBook} for a detailed discussion). Then, $H$ can be characterized by the Laplace exponent $\Phi$, that is, $$\mathbf{E}_0[\exp( - \lambda H_t)] = \exp(- t \Phi(\lambda)),$$
$\lambda \geq 0$. As usual we denote by $\mathbf{E}_x$ the expected value w.r. to $\mathbf{P}_x$ where $x$ is a starting point. Moreover, if $\Phi$ is the Laplace exponent of a subordinator, then there exists a unique pair $(\mathtt{k}, \mathtt{d})$ of non-negative real numbers and a unique measure $\Pi$ on $(0, \infty)$ with $\int (1 \wedge z) \Pi(dz) < \infty$, such that for every $\lambda \geq 0$
\begin{equation}
\Phi(\lambda) = \mathtt{k} + \mathtt{d} \lambda + \int_0^\infty \left( 1 - e^{ - \lambda z} \right) \Pi(dz). \label{LevKinFormula}
\end{equation} 
The L\'evy-Khintchine representation in formula \eqref{LevKinFormula} is written in terms of the killing rate 
\begin{align*}
\mathtt{k}=\Phi(0)
\end{align*}
and the drift coefficient 
\begin{align*}
\mathtt{d}= \lim_{\lambda\to \infty} \frac{\Phi(\lambda)}{\lambda}
\end{align*}
where
\begin{align}
\label{tailSymb}
\frac{\Phi(\lambda)}{\lambda} = \mathtt{d} +  \int_0^\infty e^{-\lambda z} \overline{\Pi}(z)dz, \qquad \overline{\Pi}(z) = \mathtt{k} + \Pi((z, \infty))
\end{align}
and $\overline{\Pi}$ is the so called \emph{tail of the L\'evy measure}. We recall that $\Phi$ is uniquely given by \eqref{LevKinFormula}. In particular, it is a Bernstein function,  then $\Phi$ is non-negative, non-decreasing and continuous. For details, we refer to the well-known book \cite{BerBook}. The interested reader can also consult the recent book \cite{SchilingBook}. 

We define the inverse process $L =\{L_t\, , \, t \geq 0\}$ to a subordinator as $$L_t := \inf\{ s \geq 0\,:\, H_s \notin (0, t)\}.$$
We recall that $H_0=0$ and $L_0=0$. We do not consider step-processes with $\Pi((0,\infty))<\infty$,  we focus only on strictly increasing subordinators with infinite measures (then $L$ turns out to be a continuous process). By definition of inverse process, we can write
\begin{align}
\label{relationPHL}
\mathbf{P}_0(L_t < s) = \mathbf{P}_0(H_s>t).
\end{align} 
We also denote by
\begin{align}
\label{densitiesHL}
h(t, x)dx=P(H_t \in dx) \quad \textrm{and} \quad l(t, x)dx=P(L_t \in dx)
\end{align}
the corresponding densities (see for example \cite{MS2008}). 
Further on we use the following potentials 
\begin{align}
\label{potentialH}
\int_0^\infty e^{-\xi x}\,h(t, x)\, dx = e^{- t \, \Phi(\xi)}, \quad \xi >0
\end{align}
and, after easy calculations involving \eqref{relationPHL} and \eqref{potentialH} (as already proved in \cite{MS2008})
\begin{align}
\label{potentialL}
\int_0^\infty e^{-\lambda t}\, l(t,x)\, dt = \frac{\Phi(\lambda)}{\lambda} e^{-x \Phi(\lambda)}, \quad \lambda>0.
\end{align}

Let $M>0$ and $w \geq 0$. Let $\mathcal{M}_w$ be the set of (piecewise) continuous function on $[0, \infty)$ of exponential order $w$ such that $|\varrho(t)| \leq M e^{wt}$. Denote by $\widetilde{\varrho}$ the Laplace transform of $\varrho$. Then, we define the operator $\mathfrak{D}^\Phi_t : \mathcal{M}_w \mapsto \mathcal{M}_w$ such that
\begin{align*}
\int_0^\infty e^{-\lambda t} \mathfrak{D}^\Phi_t \varrho(t)\, dt = \Phi(\lambda) \widetilde{\varrho}(\lambda) - \frac{\Phi(\lambda)}{\lambda} \varrho(0), \quad \lambda > w
\end{align*}
where $\Phi$ is given in \eqref{LevKinFormula}. Since $\varrho$ is exponentially bounded, the integral $\widetilde{\varrho}$ is absolutely convergent for $\lambda>w$.  The inverse Laplace transforms $\varrho$ and $\mathfrak{D}^\Phi_t \varrho$ are uniquely defined. Since
\begin{align}
\label{PhiConv}
\Phi(\lambda) \widetilde{\varrho}(\lambda) - \frac{\Phi(\lambda)}{\lambda} \varrho(0) = & \left( \lambda \widetilde{\varrho}(\lambda) - \varrho(0) \right) \frac{\Phi(\lambda)}{\lambda},
\end{align}
the function $\mathfrak{D}^\Phi_t \varrho$ can be written as a convolution involving the ordinary derivative and the inverse transform of \eqref{tailSymb} iff $\varrho \in \mathcal{M}_w \cap C([0, \infty), \mathbb{R}_+)$ and $\varrho^\prime \in \mathcal{M}_w$. We also observe that (Young's inequality)
\begin{align}
\label{YoungSymb}
\int_0^\infty |\mathfrak{D}^\Phi_t \varrho |^p dt \leq \left( \int_0^\infty |\varrho^\prime |^p dt \right) \left( \lim_{\lambda \downarrow 0} \frac{\Phi(\lambda)}{\lambda} \right)^p, \qquad p \in [1, \infty)
\end{align}
where $\lim_{\lambda \downarrow 0} \Phi(\lambda) /\lambda$ is finite only in some cases. For example, for $\mathtt{d}=0$ and $\mathtt{k}=0$:
\begin{itemize}
\item[i)] inverse Gaussian subordinator with $\Phi(\lambda) = \sigma^{-2} \left(\sqrt{2\lambda \sigma^2 +\mu^2} - \mu \right)$ with $\sigma \neq 0$;
\item[ii)] gamma subordinator with $\Phi(\lambda)= a \ln (1+ \lambda/b)$  with $ab>0$;
\item[iii)] generalized stable subordinator with $\Phi(\lambda) = (\lambda + \gamma)^\alpha - \gamma^\alpha$ with $\gamma>0$ and $\alpha \in (0,1)$.
\end{itemize}
Thus, for example, $\mathfrak{D}^\Phi_t \varrho \in L^1(0, \infty)$  if $\varrho^\prime \in L^1(0, \infty)$ and $\Phi^\prime(0)< \infty$. The operator $\mathfrak{D}^\Phi_t$, in alternative and sometimes slightly different forms, it has been first considered in \cite{SC2003} after in \cite{Koc2011} and recently in \cite{Chen2017,Toaldo2015}.

We introduce the following notation
\begin{align*}
\mathcal{M}^\prime_w =\{ \varphi \in C([0, \infty), \mathbb{R}_+ )\,:\, \varphi, \varphi^\prime \in \mathcal{M}_w \}.
\end{align*}

\begin{remark}
Let us recall a couple of special cases.
\begin{itemize}
\item[i)] We notice that when $\Phi(\lambda)=\lambda$ we have that $H_t = t$ and $L_t=t$ a.s. and in \eqref{YoungSymb} the equality holds. The operator $\mathfrak{D}^\Phi_t$ becomes the ordinary derivative
\begin{align*}
D_t \varrho (t)= \frac{d \varrho}{dt}(t).
\end{align*}  
\item[ii)] The well-known case $\Phi(\lambda)=\lambda^\alpha$, $\alpha\in (0,1)$ gives the Caputo-Djrbashian derivative
\begin{align}
D^\alpha_t \varrho(t) = \frac{1}{\Gamma(1-\alpha)} \int_0^t \varrho^\prime(s) \, (t-s)^{-\alpha}\, ds.
\end{align}
The corresponding processes are $H_t$ which is a stable subordinator and $L_t$ which is an inverse to a stable subordinator (\cite{MS2008, DOVspl2011, MS2013}).
\end{itemize}
\end{remark}

\begin{remark}
We recall the following result which will be useful below. Let us introduce the Riemann-Liouville (type) derivative 
\begin{align*}
\mathcal{D}^\Phi_t \varrho(t) = \frac{d}{dt} \int_0^t \varrho(s)\, \overline{\Pi}(t-s)\, ds.
\end{align*}
Between $\mathfrak{D}^\Phi_t$ and $\mathcal{D}^\Phi_t$ there exists the following relation
\begin{align}
\label{connection}
\mathfrak{D}^\Phi_t u(t) = \mathcal{D}^\Phi_t \big( u(t) - u(0) \big) = \mathcal{D}^\Phi_t u(t)  - u(0)\, \Pi((t, \infty))
\end{align}
where in the last step we have used the fact that
\begin{align*}
\mathcal{D}^\Phi_t \, c = c\, \Pi((t, \infty)) \quad \textrm{for a constant } c.
\end{align*}
The density $l$ of the process $L_t$ solves the following problem
\begin{equation}
\label{eqL}
\begin{array}{ll}
\displaystyle \mathcal{D}^\Phi_t l(t,x) = - \frac{\partial l}{\partial x}(t,x), \quad x>0,\, t>0,\\
\displaystyle l(t,0) = \Pi((t, \infty)),\\
\displaystyle l(0, x) = \delta(x).
\end{array}
\end{equation}
From the Laplace technique, by considering \eqref{PhiConv} and the potential \eqref{potentialL} we get immediately the result. We skip the proof (the reader can consult \cite{Toaldo2015}). 
\end{remark}

\section{Non-local linear equations}

\begin{lemma}
\label{thm:v}
Let $v \in \mathcal{M}^\prime_w$ with $w\geq 0$ be the solution to
\begin{align}
\label{eqv1}
\frac{dv}{dt} = f(v), \quad v(0)= c \geq 0
\end{align}
where $f$ is linear on $\mathcal{M}^\prime_w$. Then, $v$ is the unique classical solution to
\begin{align}
\label{eqDv1}
\mathfrak{D}^\Phi_t v = f(v) * \overline{\Pi}, \quad v(0) = c.  
\end{align}
\end{lemma}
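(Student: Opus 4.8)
The plan is to work on the Laplace-transform side throughout, where the non-local operator $\mathfrak{D}^\Phi_t$ becomes multiplication by the symbol $\Phi(\lambda)$ (modulo the initial-value correction), and to exploit the identity \eqref{PhiConv} together with the potential formula \eqref{tailSymb}. First I would take the Laplace transform of \eqref{eqv1}: writing $\widetilde v$ for the transform of $v$, the equation $v' = f(v)$ with $v(0)=c$ gives $\lambda \widetilde v(\lambda) - c = \widetilde{f(v)}(\lambda)$, valid for $\lambda > w$ since $v \in \mathcal M'_w$ and $f$ maps $\mathcal M'_w$ into $\mathcal M_w$ by linearity (so both sides are absolutely convergent transforms). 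The point is that this is the \emph{same} $v$ we want to plug into \eqref{eqDv1}; no new existence question arises because $v$ is given to us by hypothesis.

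Next I would compute the Laplace transform of the right-hand side of \eqref{eqDv1}. Since $\overline{\Pi}$ is (by \eqref{tailSymb}, with $\mathtt d = 0$ in our standing assumptions) the function whose transform is $\Phi(\lambda)/\lambda$, the convolution theorem yields
\[
\widetilde{\big(f(v)*\overline{\Pi}\big)}(\lambda) = \widetilde{f(v)}(\lambda)\,\frac{\Phi(\lambda)}{\lambda} = \big(\lambda \widetilde v(\lambda) - c\big)\frac{\Phi(\lambda)}{\lambda},
\]
where the second equality is exactly the Laplace transform of \eqref{eqv1} obtained in the previous step. By \eqref{PhiConv} this last expression equals $\Phi(\lambda)\widetilde v(\lambda) - \tfrac{\Phi(\lambda)}{\lambda} v(0)$, which is by definition the Laplace transform of $\mathfrak{D}^\Phi_t v$. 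Hence $\widetilde{\mathfrak{D}^\Phi_t v} = \widetilde{f(v)*\overline{\Pi}}$ for all $\lambda > w$, and since both $v$ and $\mathfrak{D}^\Phi_t v$ have uniquely defined inverse transforms in the relevant function class (as recorded in the construction of $\mathfrak D^\Phi_t$ on $\mathcal M_w$), equation \eqref{eqDv1} holds pointwise. This also shows $v$ is a \emph{classical} solution: $v \in \mathcal M'_w$ so $\mathfrak D^\Phi_t v$ is well defined via the convolution representation in \eqref{PhiConv}, and the identity holds as functions, not merely distributionally.

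For uniqueness, suppose $u \in \mathcal M'_w$ also solves \eqref{eqDv1} with $u(0)=c$. Transforming and using \eqref{PhiConv} backwards, $\Phi(\lambda)\widetilde u(\lambda) - \tfrac{\Phi(\lambda)}{\lambda}c = \widetilde{f(u)}(\lambda)\tfrac{\Phi(\lambda)}{\lambda}$, so dividing by $\Phi(\lambda)/\lambda > 0$ gives $\lambda \widetilde u(\lambda) - c = \widetilde{f(u)}(\lambda)$, i.e. $u' = f(u)$, $u(0)=c$ after inversion. Since \eqref{eqv1} is a linear ODE with given initial datum, its solution in $\mathcal M'_w$ is unique, so $u = v$. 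The main obstacle — really the only subtle point — is the bookkeeping around the function classes and the use of $f$ being \emph{linear on $\mathcal M'_w$}: linearity is what guarantees $f(v) \in \mathcal M_w$ so that all Laplace transforms in sight converge for $\lambda > w$ and the convolution $f(v)*\overline{\Pi}$ makes sense, and it is also implicitly what lets the "divide by $\Phi(\lambda)/\lambda$" step in the uniqueness argument stay inside the class; I would make sure to state these membership facts explicitly rather than gloss over them, since the whole argument is otherwise a short Laplace-transform computation.
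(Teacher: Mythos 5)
Your proof is correct and follows essentially the same route as the paper's: take Laplace transforms of both sides, use \eqref{PhiConv} together with the fact that $\overline{\Pi}$ has transform $\Phi(\lambda)/\lambda$, and invert. The only difference is that you spell out the uniqueness direction explicitly by reversing the computation, which the paper leaves implicit in its chain of equivalent Laplace-transform identities.
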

\begin{proof}
Notice that $f$ on $\mathcal{M}^\prime_w$ together with \eqref{eqv1} say that $f: \mathcal{M}^\prime_w \to \mathcal{M}^\prime_w$. The Laplace transform of equation \eqref{eqv1} writes
\begin{align*}
\lambda \widetilde{v}(\lambda) - v(0) = f(\widetilde{v}(\lambda))
\end{align*}
or equivalently
\begin{align*}
\Phi(\lambda) \widetilde{v}(\lambda) - \frac{\Phi(\lambda)}{\lambda} v(0) = f\left( \frac{\Phi(\lambda)}{\lambda} \widetilde{v}(\lambda)\right).
\end{align*}
The latter has the following reading
\begin{align*}
\Phi(\lambda) \widetilde{v}(\lambda) - \frac{\Phi(\lambda)}{\lambda} v(0) = \int_0^\infty e^{-\lambda t} \mathfrak{D}^\Phi_t v(t) dt
\end{align*}
and
\begin{align*}
f\left( \frac{\Phi(\lambda)}{\lambda} \widetilde{v}(\lambda)\right)= \frac{\Phi(\lambda)}{\lambda}  f\left( \widetilde{v}(\lambda)\right) 
= & \frac{\Phi(\lambda)}{\lambda} \int_0^\infty f(v(z)) e^{-z \lambda} dz\\
= & \int_0^\infty e^{-\lambda t} \left( \int_0^t f(v(t-z)) \overline{\Pi}(z)dz \right) dt. 
\end{align*}
That is, the following pointwise equality holds, $\forall t >0$,
\begin{align*}
\mathfrak{D}^\Phi_t v(t) = \int_0^t f(v(t-z)) \overline{\Pi}(z)dz.
\end{align*}
\end{proof}

%
%

The solution to $\mathfrak{D}^\Phi_t u = f(u)$ where $f$ is linear can be written in terms of the density $l(t, x)$. This is expected in case of linear $f$ and it is well-known in case of linear operator $Av$. Indeed, the latter can be included in the theory of time-changed processes first introduced in \cite{BM2001} for the fractional (Caputo-Djrbashian) derivative and after, in \cite{Toaldo2015, Chen2017} for a general non-local operator. For the sake of completeness we provide the following statement.

\begin{theorem}
\label{thm:linf}
Let the setting of Lemma \ref{thm:v} prevails with $w\geq 0$ and $v \in \mathcal{M}^\prime_w$. Then, the function
\begin{align*}
\mathcal{M}^\prime_{w} \ni u(t) = \int_0^\infty v(x)l(t, dx)  =: \mathbf{E}_0[v(L_t)]
\end{align*}
is the unique classical solution to the non-local Cauchy problem
\begin{align}
\label{eqDf}
\mathfrak{D}^\Phi_t u = f(u), \quad u(0)=c \geq 0.
\end{align} 
\end{theorem}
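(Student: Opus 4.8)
The plan is to mimic the Laplace-transform argument used in the proof of Lemma~\ref{thm:v}, now reading the potential \eqref{potentialL} in the opposite direction. First I would dispose of the initial datum: since $L_0=0$ a.s.\ we get $u(0)=\mathbf{E}_0[v(L_0)]=v(0)=c$. Next, using the bound $|v(x)|\le Me^{wx}$ together with \eqref{potentialL} applied to the exponential $e^{wx}$, the double integral defining $\widetilde u$ is absolutely convergent whenever $\Phi(\lambda)>w$, so Fubini and \eqref{potentialL} give
\begin{align*}
\widetilde u(\lambda)=\int_0^\infty v(x)\left(\int_0^\infty e^{-\lambda t}\,l(t,x)\,dt\right)dx=\frac{\Phi(\lambda)}{\lambda}\,\widetilde v(\Phi(\lambda)),\qquad \Phi(\lambda)>w .
\end{align*}
The same estimate, via the monotonicity of $t\mapsto \mathbf{E}_0[e^{wL_t}]$, shows that $u$ (and, a posteriori, $\mathfrak{D}^\Phi_t u$) is of exponential order; jointly with the continuity of $L$ and dominated convergence this places $u$ in $\mathcal{M}^\prime_{w}$ (at worst with a larger exponential rate dictated by $\Phi$), a piece of bookkeeping I regard as routine.

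The core of the argument is a short computation. Taking Laplace transforms in \eqref{eqv1} and using that $f$ is linear — so it commutes with the Laplace integral and with multiplication by scalars, exactly as exploited in the proof of Lemma~\ref{thm:v} — one has $\mu\widetilde v(\mu)-c=f(\widetilde v(\mu))$ for $\mu>w$; since $\Phi$ is continuous, non-decreasing and $\Phi(\lambda)\to\infty$, this identity may be evaluated at $\mu=\Phi(\lambda)$ for all large $\lambda$. Hence, using the displayed formula for $\widetilde u$ and the definition of $\mathfrak{D}^\Phi_t$,
\begin{align*}
\int_0^\infty e^{-\lambda t}\mathfrak{D}^\Phi_t u(t)\,dt
&=\Phi(\lambda)\widetilde u(\lambda)-\frac{\Phi(\lambda)}{\lambda}u(0)
=\frac{\Phi(\lambda)}{\lambda}\big(\Phi(\lambda)\widetilde v(\Phi(\lambda))-c\big)\\
&=\frac{\Phi(\lambda)}{\lambda}\,f\big(\widetilde v(\Phi(\lambda))\big)
=f\!\left(\frac{\Phi(\lambda)}{\lambda}\widetilde v(\Phi(\lambda))\right)\\
&=f(\widetilde u(\lambda))
=\int_0^\infty e^{-\lambda t}f(u(t))\,dt .
\end{align*}
Since $\mathfrak{D}^\Phi_t u$ and $f(u)$ have the same (uniquely defined) Laplace transform, they agree pointwise for $t>0$, which is \eqref{eqDf}.

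For uniqueness I would run the computation backwards: if $\bar u\in\mathcal{M}^\prime_w$ solves \eqref{eqDf}, its transform satisfies $\Phi(\lambda)\widetilde{\bar u}(\lambda)-\tfrac{\Phi(\lambda)}{\lambda}c=f(\widetilde{\bar u}(\lambda))$, and comparing with $\mu\widetilde v(\mu)-c=f(\widetilde v(\mu))$ at $\mu=\Phi(\lambda)$ — legitimate because $H$ is strictly increasing, so $\Phi$ is invertible — forces $\widetilde{\bar u}(\lambda)=\tfrac{\Phi(\lambda)}{\lambda}\widetilde v(\Phi(\lambda))=\widetilde u(\lambda)$; injectivity of the Laplace transform on $\mathcal{M}_w$ then yields $\bar u=u$. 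The one genuinely delicate point is the pair of interchanges $\widetilde{f(v)}=f(\widetilde v)$, $\widetilde{f(u)}=f(\widetilde u)$ and the homogeneity $f(a\,g)=a\,f(g)$ for scalars $a$: these are precisely the properties of a linear $f$ used without comment in Lemma~\ref{thm:v}, and in the same spirit I would invoke them here (noting that they hold, e.g., when $f$ is multiplication by a constant, a constant-coefficient differential operator, or a convolution operator). As a cross-check one can also argue analytically: differentiate $u(t)=\int_0^\infty v(x)l(t,x)\,dx$ under the integral sign, insert the governing problem \eqref{eqL} for $l$ together with \eqref{connection}, and integrate by parts in $x$ — the boundary term at $x=0$ equals $c\,\Pi((t,\infty))$ and cancels the one produced by \eqref{connection} — reducing the claim to the commutation of $f$ with the time-change map $g\mapsto\mathbf{E}_0[g(L_\cdot)]$, once more a consequence of linearity.
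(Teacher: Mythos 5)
Your proposal is correct and follows essentially the same route as the paper: both hinge on the identity $\widetilde u(\lambda)=\tfrac{\Phi(\lambda)}{\lambda}\widetilde v(\Phi(\lambda))$ obtained from \eqref{potentialL}, the linearity of $f$ to pull out the factor $\Phi(\lambda)/\lambda$, and the substitution $\lambda^\ast=\Phi(\lambda)$ reducing the claim to the Laplace transform of $v'=f(v)$. The extra bookkeeping you supply (membership in $\mathcal{M}'_w$, the explicit uniqueness step, the analytic cross-check via \eqref{eqL}) only fills in details the paper leaves implicit.
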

\begin{proof}
We have that
\begin{align*}
\widetilde{u}(\lambda) = \frac{\Phi(\lambda)}{\lambda} \int_0^\infty v(x) e^{-x \Phi(\lambda)} dx = \frac{\Phi(\lambda)}{\lambda}\, \widetilde{v}(\Phi(\lambda)), \quad \Phi(\lambda)>w
\end{align*}
and the equation \eqref{eqDf} leads to
\begin{align*}
\Phi(\lambda)\widetilde{u}(\lambda) - \frac{\Phi(\lambda)}{\lambda} u(0) = f(\widetilde{u}(\lambda))
\end{align*}
From the linearity of $f$, we can write
\begin{align*}
\Phi(\lambda) \widetilde{v}(\Phi(\lambda)) - u(0) = f(\widetilde{v}(\Phi(\lambda)))
\end{align*}
Set $\lambda^* = \Phi(\lambda)$. From the fact that $u(0)= v(0)$ we write
\begin{align*}
\lambda^* \widetilde{v}(\lambda^*) - v(0) = f(\widetilde{v}(\lambda^*))
\end{align*}
which holds, for $f$ linear, if and only if $v^\prime = f(v)$.  
\end{proof}
%
%

Let us consider $\Phi(\lambda) = \lambda^\alpha$. The well-known case 
$$f(v)=-a v, \quad a >0$$ 
brings to the solution
\begin{align}
\label{M-L}
u(t) = E_\alpha(-a t^\alpha) = \sum_{k \geq 0} \frac{(-a t^\alpha)^k}{\Gamma(\alpha k + 1)} 
\end{align}
which is the Mittag-Leffler function. Thus, equation \eqref{eqv1} gives $v(x)= c e^{-a x}$ and equation \eqref{eqDv1} gives (as proved in \cite{Bingham1971})
\begin{align}
\label{M-L-u}
u(t) = \mathbf{E}_0[e^{-a L_t}].
\end{align}
We also notice that, from 
\begin{align*}
\sum_{k \geq 0} \frac{(-a t^\alpha)^k}{\Gamma(\alpha k + 1)} = \sum_{k \geq 0} \frac{(-a)^k}{k!} \mathbf{E}_0[(L_t)^k] = \mathbf{E}_0 \left[ e^{-a L_t}\right]
\end{align*}
we can write
\begin{align}
\label{momentLstable}
\mathbf{E}_0[(L_t)^k] = \frac{\Gamma(k+1)}{\Gamma(\alpha k + 1)} t^{\alpha k}, \quad k \in \mathbb{N}_0.
\end{align}

We focus on $\Phi$ given in \eqref{LevKinFormula} with 
\begin{align}
\label{assumptionPhi}
\mathtt{d}=0 \quad \textrm{and} \quad \mathtt{k}=0.
\end{align}
Further on we always assume that \eqref{assumptionPhi} applies. 

\section{Non-local non-linear equations}

Let us consider 
\begin{align*}
f(z)=z(1-z).
\end{align*}
We now approach the problem to find a probabilistic representation for the fractional logistic equation. In particular, we consider the following two cases involving inverses to subordinators.

\subsection{CASE I}
The solution $v$ to the logistic equation
\begin{align*}
\frac{dv}{dt} = f(v), \quad v(0)=v_0 \in (0, 1)
\end{align*}
on the positive real line can be written as
\begin{align}
\label{logv}
v(t) = \frac{v_0}{v_0 + (1-v_0) e^{-t}} = \sum_{k \geq 0} \left( \frac{v_0 -1}{v_0} \right)^k e^{-k t}, \quad t\geq 0.  
\end{align}
We denote by $\mathbf{V}ar[Y]$ the variance of $Y$, that is $\mathbf{V}ar[Y] = \mathbf{E}[(Y - \mathbf{E}[Y])^2]$.

Let us introduce the process $v(L_t)$ whose realizations include plateaux according to the random time $L_t$. We have that
\begin{align*}
v(L_t) = v_0 + \int_0^{L_t} f(v(s)) \, ds
\end{align*}
or equivalently
\begin{align*}
v(L_t) = v_0 + \int_0^\infty f(v(s)) \, \mathbf{1}_{(s < L_t)} ds
\end{align*}
where $L_t$ can be regarded as the first time the subordinator $H_s$ exits the set $(0,t)$, that is $(s<L_t) \equiv (t> H_s)$ under $\mathbf{P}_0$. Thus, 
\begin{align}
\label{formulavL}
\mathbf{E}_0[v(L_t)] = v_0 + \int_0^\infty f(v(s)) \, \mathbf{P}_0(H_s < t)\, ds.
\end{align}
The $\lambda$-potential
\begin{align}
\label{Lpot}
\mathbf{E}_0 \left[ \int_0^\infty e^{-\lambda t} \, v(L_t)\, dt \,;\, L_t < \zeta \right] = \frac{\Phi(\lambda)}{\lambda} \mathbf{E}_0 \left[ \int_0^\zeta e^{- t \Phi(\lambda)} v(t)\, dt \right], \quad \lambda>0
\end{align}
can be associated with \eqref{formulavL} only if $\zeta = \infty$ almost surely. 
Formula \eqref{Lpot} can be obtained from \eqref{potentialH} and \eqref{potentialL}, see \cite{CapDovDR} for details. Let us consider $\zeta$ such that $\zeta=T<\infty$ almost surely. From \eqref{Lpot} we observe that, as $\lambda\to 0^+$, we obtain for $u(t)= \mathbf{E}_0[v(L_t); L_t < T]$,
\begin{align}
\label{integrabilityu}
\int_0^\infty u(t)\, dt = \left( \lim_{\lambda \downarrow 0} \frac{\Phi(\lambda)}{\lambda} \right) \int_0^T v(t)\, dt
\end{align} 
which is finite only if the limit $\Phi(\lambda)/\lambda$ is finite. Since $v$ is continuous and bounded, formula \eqref{formulavL} is finite. In order to have a finite integral in \eqref{integrabilityu}, the function $u$ is obtained by extension with zero for $t\geq H_T$, that is for $L_t \geq T$. 

Assume that $v,u \in L^1((0,T^*))$ for some $T^*$. Then, formula \eqref{integrabilityu} can be considered in order to have a reading in terms of delayed and rushed growth (see \cite{CapDovDR}). An helpful example of this phenomenon is presented in Figure \ref{figDR}.

\begin{theorem}
\label{thm:var}
Let $\sigma \in C_b([0, \infty))$. Let $u \in \mathcal{M}^\prime_0$ be the solution to 
\begin{align*}
\mathfrak{D}^\Phi_t u + \sigma = f(u), \quad u(0)=u_0 \in (0,1).
\end{align*}
Then, $u(t)= \mathbf{E}_0[v(L_t)]$ if and only if $\sigma(t)=\mathbf{V}ar[v(L_t)]$.
\end{theorem}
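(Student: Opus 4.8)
The plan is to determine, via Laplace transforms, the non-local equation genuinely satisfied by $\mathbf{E}_0[v(L_t)]$, and then to recognise that the gap between $\mathfrak{D}^\Phi_t(\cdot)$ and $f(\cdot)$ is exactly the variance; the equivalence will then be immediate in one direction and will follow from uniqueness of the Cauchy problem in the other.

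First I would record two facts. Since $v$ takes values in the bounded interval with endpoints $v(0)$ and $1$ and $v'=f(v)$ is bounded, both $v$ and $\psi:=v^2$ lie in $\mathcal{M}^\prime_0$; and the computation already carried out in the proof of Theorem \ref{thm:linf} — which rests only on the potential \eqref{potentialL} — applies verbatim to any bounded continuous $\varphi$, giving $\widetilde{u_\varphi}(\lambda)=\frac{\Phi(\lambda)}{\lambda}\widetilde{\varphi}(\Phi(\lambda))$ for $\Phi(\lambda)>0$, where $u_\varphi(t):=\mathbf{E}_0[\varphi(L_t)]$. Applying this to $\varphi=v$ and to $\varphi=\psi$ identifies the transforms of $u(t)=\mathbf{E}_0[v(L_t)]$ and of $g(t):=\mathbf{E}_0[(v(L_t))^2]$. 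Now, since $f(z)=z(1-z)$, the relation $v'=f(v)$ reads $v'=v-\psi$, so in Laplace variables $s\widetilde{v}(s)-v(0)=\widetilde{v}(s)-\widetilde{\psi}(s)$; evaluating at $s=\Phi(\lambda)$, multiplying by $\Phi(\lambda)/\lambda$, and using $u(0)=v(L_0)=v(0)$ together with the two transform identities, the left-hand side becomes $\Phi(\lambda)\widetilde{u}(\lambda)-\frac{\Phi(\lambda)}{\lambda}u(0)$, which by the definition of $\mathfrak{D}^\Phi_t$ and \eqref{PhiConv} is the Laplace transform of $\mathfrak{D}^\Phi_t u$, while the right-hand side becomes $\widetilde{u}(\lambda)-\widetilde{g}(\lambda)$. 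All transforms are absolutely convergent for $\lambda>0$ since the functions are bounded, so inversion yields the pointwise identity, for $t>0$,
\begin{align*}
\mathfrak{D}^\Phi_t u(t)=\mathbf{E}_0[v(L_t)]-\mathbf{E}_0\big[(v(L_t))^2\big]=f\big(u(t)\big)-\Big(\mathbf{E}_0\big[(v(L_t))^2\big]-\big(\mathbf{E}_0[v(L_t)]\big)^2\Big),
\end{align*}
where in the last step I added and subtracted $u(t)^2$ and used $f(u)=u-u^2$. The bracket equals $\mathbf{V}ar[v(L_t)]$, which is bounded (as $v(L_t)\in[v(0),1)$) and continuous (as $L$ is a continuous process and $v$ is bounded continuous), hence a legitimate $\sigma$.

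This identity already says that $\mathbf{E}_0[v(L_t)]$ solves $\mathfrak{D}^\Phi_t w+\sigma=f(w)$, $w(0)=v(0)$, exactly when $\sigma(t)=\mathbf{V}ar[v(L_t)]$. For the ``only if'' direction, if $u=\mathbf{E}_0[v(L_t)]$ then $u(0)=v(0)$ automatically and $u$ satisfies this equation both with $\sigma(\cdot)=\mathbf{V}ar[v(L_\cdot)]$ and, by hypothesis, with the given $\sigma$; subtracting the two pointwise identities forces $\sigma(t)=\mathbf{V}ar[v(L_t)]$ (and, incidentally, $u_0=v(0)$). For the ``if'' direction, when $\sigma(t)=\mathbf{V}ar[v(L_t)]$ (with $v$ starting at $u_0$) the function $\mathbf{E}_0[v(L_t)]$ is a solution in $\mathcal{M}^\prime_0$ of the stated Cauchy problem; since $z\mapsto f(z)-\sigma(t)$ is Lipschitz, that problem has a unique solution — by a contraction argument on the equivalent Volterra reformulation, or by a Grönwall estimate for the difference of two solutions rewritten through \eqref{connection} — and therefore $u=\mathbf{E}_0[v(L_t)]$.

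The algebra above is routine; the points demanding care are (a) that $\mathbf{E}_0[v(L_\cdot)]\in\mathcal{M}^\prime_0$, so that $\mathfrak{D}^\Phi_t$ acts on it in the convolution sense of \eqref{connection}: differentiating \eqref{formulavL}, its derivative is $\int_0^\infty f(v(s))\,h(s,t)\,ds$, and local boundedness of this is where the standing assumption \eqref{assumptionPhi}, and if needed $\Phi'(0)<\infty$ (or a case-by-case check as in the list following \eqref{YoungSymb}), enters; and (b) the uniqueness invoked in the ``if'' direction, where one must check that the nonlinear source does not spoil the contraction/Grönwall argument on all of $[0,\infty)$. I expect (a) to be the principal obstacle, and the series expansion \eqref{logv} together with the moments of $L_t$ to be the convenient tool for controlling $\mathbf{E}_0[v(L_t)]$ and its regularity.
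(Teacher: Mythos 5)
Your proposal is correct and lands on exactly the same central decomposition as the paper -- namely that $\mathfrak{D}^\Phi_t\,\mathbf{E}_0[v(L_t)]=\mathbf{E}_0[f(v(L_t))]$ and that the gap between $\mathbf{E}_0[f(v(L_t))]$ and $f(\mathbf{E}_0[v(L_t)])$ is precisely $\mathbf{V}ar[v(L_t)]$ -- but you reach the key identity by a different route. The paper works in the space variable: it uses the governing equation \eqref{eqL} for the density $l(t,x)$, integrates by parts in $x$ to get $\mathcal{D}^\Phi_t u(t)=v_0\,l(t,0)+\mathbf{E}_0[v^\prime(L_t)]$, and then cancels the boundary term $v_0\,\Pi((t,\infty))$ against the constant term in the relation \eqref{connection} between $\mathcal{D}^\Phi_t$ and $\mathfrak{D}^\Phi_t$. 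You instead work entirely in the Laplace domain: you apply the potential identity $\widetilde{u_\varphi}(\lambda)=\frac{\Phi(\lambda)}{\lambda}\widetilde{\varphi}(\Phi(\lambda))$ to both $\varphi=v$ and $\varphi=v^2$ and push the transformed ODE $s\widetilde{v}(s)-v(0)=\widetilde{v}(s)-\widetilde{v^2}(s)$ through the substitution $s=\Phi(\lambda)$, which is essentially the mechanism of Theorem \ref{thm:linf} adapted to the quadratic nonlinearity. The paper's argument yields the general identity $\mathfrak{D}^\Phi_t\,\mathbf{E}_0[v(L_t)]=\mathbf{E}_0[v^\prime(L_t)]$ for any suitable $v$, whereas yours exploits the specific ODE satisfied by the logistic $v$; both are legitimate, and your version avoids having to justify the integration by parts and the boundary behaviour of $l(t,x)$ as $x\downarrow 0$. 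You are also more explicit than the paper about the logic of the ``if and only if'': the paper stops at the identity, while you correctly observe that the ``if'' direction needs uniqueness for the Cauchy problem $\mathfrak{D}^\Phi_t w+\sigma=f(w)$ (which you sketch via a Lipschitz/Gr\"onwall argument but do not carry out -- the paper does not carry it out either). Your flagged concern (a), membership of $\mathbf{E}_0[v(L_\cdot)]$ in $\mathcal{M}^\prime_0$, is likewise left implicit in the paper, so I do not count it against you.
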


\begin{proof}

With \eqref{eqL} in mind, an integration by parts yields 
\begin{align*}
\mathcal{D}^\Phi_t u(t) =  - \int_0^\infty v(x) \, \frac{dl}{dx}(t,x)\, dx =  v_0 \, l(t,0) + \mathbf{E}_0[v^\prime(L_t)]
\end{align*}
where
\begin{align*}
\lim_{x \downarrow 0} \int_0^\infty e^{-\lambda t} l(t,x)\, dt =\lim_{x \to 0} \frac{\Phi(\lambda)}{\lambda} e^{-x \Phi(\lambda)} = \frac{\Phi(\lambda)}{\lambda},
\end{align*}
that is,
\begin{align*}
\int_0^\infty e^{-\lambda t} l(t,0)\, dt = \int_0^\infty e^{-\lambda t} \Pi((t,\infty))\, dt.
\end{align*}
From \eqref{connection}, we have that
\begin{align*}
\mathfrak{D}^\Phi_t u(t) = \mathcal{D}^\Phi_t u(t)  - u(0)\, \Pi((t, \infty)) 
\end{align*}
and therefore, by taking into account that $u(0)=v_0$, we write
\begin{align*}
\mathfrak{D}^\Phi_t u(t) = \mathbf{E}_0[v^\prime(L_t)] = \mathbf{E}_0[f(v(L_t))].
\end{align*} 
Since $f(v)=v(1-v)$, we write
\begin{align*}
\mathbf{E}_0[v^\prime(L_t)] = \mathbf{E}_0[v(L_t) - v^2(L_t)] = u(t) - u^2(t) - \mathbf{E}_0[v^2(L_t) - u^2(t)].
\end{align*}
Now we observe that
\begin{align*}
\mathbf{E}_0[v^2(L_t) - u^2(t)]
= & \mathbf{E}_0[v^2(L_t)] - \big(\mathbf{E}_0[v(L_t)] \big)^2 = \mathbf{V}ar[v(L_t)]
\end{align*}

%
%
%
%
%
and this concludes the proof.
\end{proof}

Let us consider the case $\Phi(\lambda)=\lambda^\alpha$. With formula \eqref{logv} at hand, from \eqref{M-L} and \eqref{M-L-u}, we can write
\begin{align}
\label{funcWest}
u(t) = \sum_{k \geq 0} \left( \frac{u_0 -1}{u_0} \right)^k E_\alpha(-k t^\alpha), \quad t\geq 0.
\end{align}
This is the solution on the whole positive real line to 
\begin{align}
\label{eqModified}
D^\alpha_t u + \sigma = u(1-u), \quad u(0)=u_0 \in (0,1)
\end{align}
according to Theorem \ref{thm:var}. The function \eqref{funcWest} has been first considered in \cite{West} and after in \cite{ALN16} in order to have a good approximation of the solution to the fractional logistic equation (with Caputo-Djrbashian derivative). In \cite{Izadi2020} the author considered shifted-Legendre polynomials in order to obtain approximate solutions of the fractional-order logistic equation. In \cite{OB17} the authors considered a simple algorithm in order to obtain such solution including a numerical implementation in terms of Pad\`{e} approximation. Recently, it has been also considered in \cite{FCAAlog} as the solution to a modified fractional logistic equation which is related to \eqref{eqModified}. 

In conclusion, Theorem \ref{thm:var} extends the non-local logistic equation associated with \eqref{logv} to a general $\Phi$.

\subsection{CASE II}
The solution $v$ to the logistic equation
\begin{align*}
\frac{dv}{dt} = f(v), \quad v(0)=v_0 \in (0, 1)
\end{align*}
on the convergence set $(0, r)$ of the real line can be written as
\begin{align}
\label{vLog}
v(t) = \sum_{k \geq 0} E_k\, \frac{t^k}{k!}, \quad t \in (0, r)
\end{align}
where $E_0=v_0$ and the sequence $\{E_k\}_k$ is given by the Euler's numbers if $v_0 =1/2$. The equation
\begin{align*}
D^\alpha_t u = u(1-u)
\end{align*}
(where $D^\alpha_t$ is the Caputo-Djrbashian derivative) has been investigated in \cite{PhysA}. It turns out that, 
\begin{align}
\label{uLog}
u(t)= \sum_{k \geq 0} E_k^\alpha\, \frac{t^{\alpha k}}{\Gamma(\alpha k +1)}, \quad t \in (0, r_\alpha)
\end{align}
is written in terms of the coefficient $\{E_k^\alpha\}_k$ which are strictly related with the numbers $\{E_k\}_k$. In particular, we have that $E^1_k = E_k$, $\forall\, k \geq 0$ and, for the sequence $\{E^\alpha_k\}_k$, we have that 
\begin{align*}
& E^\alpha_0 = u_0 & \quad \textrm{(the initial datum)}\\
& E^\alpha_1 = E^\alpha_0 (1 - E^\alpha_0) & \quad \textrm{(the logistic constraint)}
\end{align*}
and 
\begin{align*}
E^\alpha_{k+1} = E^\alpha_k - \sum_{i=1}^k \left[\!\begin{matrix} \, k \, \\ \, i\,  \end{matrix}\!\right]_\alpha E^\alpha_i\, E^\alpha_{k-i}, \quad k \in \mathbb{N}
\end{align*}
is the generating recursive formula. We termed  
\begin{align}
\label{binomConvolutionML}
\left[\!\begin{matrix} \, k \, \\ \, i\,  \end{matrix}\!\right]_\alpha = \frac{\Gamma(\alpha k + 1)}{\Gamma(\alpha i+1) \Gamma(\beta (k-i)+1)}
\end{align}
as fractional binomial coefficient because of the many similar properties shared with the binomial coefficient. For example,
\begin{align*}
\left[\!\begin{matrix} \, k \, \\ \, i\,  \end{matrix}\!\right]_1 = \frac{k!}{i!\, (k-i)!} = \binom{k}{i}.
\end{align*} 
Straightforward calculations show that, for any $\sigma$, 
\begin{align*}
u(t) \neq \mathbf{E}_0[v(L_t)].
\end{align*}
Indeed, $u$ is defined on compacts $K \subseteq (0,r)$. However, we are still able to establish some connection between $u$ and the process $L_t$. Let us consider
\begin{align}
\label{phiMomL}
\phi_k(t) := \frac{1}{k!} \mathbf{E}_0[(L_t)^k], \quad k \in \mathbb{N}, \quad t \in (0, r)
\end{align}
which is the rescaled moment of order $k$ of $L_t$. For a suitable sequence $\{E^\Phi_k \}_k$, we introduce the function
\begin{align}
\label{functUbar}
\bar{u}(t) := \sum_{k \geq 0} E^\Phi_k \, \phi_k(t) = E^\Phi_0 \, \phi_0(t) + \sum_{k \geq 1} E^\Phi_k \, \phi_k(t)
\end{align}
where obviously $\phi_k(0)=0$ $\forall\, k>0$ and $\phi_0(t)=1$ $\forall\, t\geq 0$.

\begin{lemma}
\label{lemmaStable}
If $\Phi(\lambda) = \lambda^\alpha$ and $E^\Phi_k = E^\alpha_k$ for any $k$, then
\begin{align*}
u(t) = \bar{u}(t), \quad \forall\, t \in (0, r_\alpha).
\end{align*}
\end{lemma}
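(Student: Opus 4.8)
The plan is to compare the two series \eqref{uLog} and \eqref{functUbar} term by term, using the explicit moments of the inverse stable subordinator computed in \eqref{momentLstable}. When $\Phi(\lambda)=\lambda^\alpha$, formula \eqref{momentLstable} gives $\mathbf{E}_0[(L_t)^k] = \frac{\Gamma(k+1)}{\Gamma(\alpha k+1)} t^{\alpha k}$, hence from the definition \eqref{phiMomL}
\begin{align*}
\phi_k(t) = \frac{1}{k!}\,\mathbf{E}_0[(L_t)^k] = \frac{1}{k!}\,\frac{\Gamma(k+1)}{\Gamma(\alpha k+1)}\,t^{\alpha k} = \frac{t^{\alpha k}}{\Gamma(\alpha k+1)}.
\end{align*}
First I would substitute this into \eqref{functUbar} with $E^\Phi_k = E^\alpha_k$, obtaining $\bar u(t) = \sum_{k\ge 0} E^\alpha_k\, \frac{t^{\alpha k}}{\Gamma(\alpha k+1)}$, which is exactly the right-hand side of \eqref{uLog}. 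So the identity $u(t)=\bar u(t)$ reduces, for each $t$, to the equality of two convergent power series in $t^\alpha$ with identical coefficients; one checks the $k=0$ term separately, where $\phi_0(t)=1$ and $E^\alpha_0=u_0=v_0$ matches the constant term of \eqref{uLog}.

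The remaining point is the domain: I must argue that both series converge on the common interval $(0,r_\alpha)$ so that the term-by-term identification is legitimate there. The series \eqref{uLog} converges on $(0,r_\alpha)$ by the construction in \cite{PhysA}; since \eqref{functUbar} has, under the hypotheses, literally the same terms, it converges on the same set, and on that set the two analytic functions coincide because all Taylor coefficients (in the variable $t^\alpha$) agree. I would phrase this as: the rescaled moments $\phi_k$ are precisely the Mittag-Leffler-type monomials $t^{\alpha k}/\Gamma(\alpha k+1)$, the coefficients $E^\Phi_k$ are by assumption the $E^\alpha_k$, and therefore \eqref{functUbar} is term-wise identical to \eqref{uLog}, whence $\bar u \equiv u$ on $(0,r_\alpha)$.

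I do not expect a serious obstacle here: the lemma is essentially a bookkeeping consequence of the moment formula \eqref{momentLstable}, which is already recorded in the excerpt. The only mild subtlety is making sure one is allowed to write $\bar u$ as a genuine pointwise sum rather than a formal one — that is, that $\sum_k |E^\alpha_k|\,\phi_k(t)<\infty$ on $(0,r_\alpha)$ — but this is inherited from the convergence of \eqref{uLog} established in \cite{PhysA}, since the absolute values of the terms coincide. If one wanted an independent check one could instead verify that $\bar u$ satisfies $D^\alpha_t \bar u = \bar u(1-\bar u)$ with $\bar u(0)=u_0$ by applying Theorem~\ref{thm:linf}-type reasoning to each monomial $\phi_k$ and invoking the recursive definition of $\{E^\alpha_k\}_k$ together with the fractional-binomial convolution identity \eqref{binomConvolutionML}, then conclude by uniqueness; but the direct coefficient comparison is shorter and is the route I would take.
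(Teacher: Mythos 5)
Your argument is correct and is essentially the paper's own proof: both compute $\phi_k(t)=t^{\alpha k}/\Gamma(\alpha k+1)$ from \eqref{momentLstable} and identify \eqref{functUbar} term by term with \eqref{uLog}. The extra remarks on convergence are a sensible (if optional) elaboration of the same comparison.
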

\begin{proof}
The proof follows immediately by comparing \eqref{uLog} with \eqref{functUbar}. Since $\mathfrak{D}^\Phi_t = D^\alpha_t$, the coefficients $E^\Phi_k$ are exactly given by $E^\alpha_k$ and, from \eqref{momentLstable},
\begin{align*}
\phi_k(t) = \frac{t^{\alpha k}}{\Gamma(\alpha k + 1)}.
\end{align*}
This concludes the proof.
\end{proof}

We notice that the representation \eqref{vLog} holds on compact sets. In particular, $v(t)$, $t<r$ implies that $v(L_t)$ is defined as $L_t < r$. That is, $t<H_r$. Thus, we should consider the probabilistic representation
\begin{align*}
\mathbf{E}_0[v(L_t); t < H_r]
\end{align*}
for the solution \eqref{uLog} where $H$ is a stable subordinator and $L$ is the inverse to $H$.

\begin{lemma}
\label{lemmaphi}
Let us consider \eqref{phiMomL}. We have that
\begin{align*}
\mathfrak{D}^\Phi_t \phi_k(t) = \phi_{k-1}(t), \quad t>0, \quad k \in \mathbb{N}.
\end{align*}
\end{lemma}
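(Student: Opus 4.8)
\emph{Proof proposal.} The plan is to argue on the Laplace side, exactly in the spirit of the proof of Lemma~\ref{thm:v}, and then invoke uniqueness of the inverse Laplace transform. First I would compute $\widetilde{\phi_k}$. Writing $\phi_k(t)=\frac{1}{k!}\int_0^\infty x^k\, l(t,x)\,dx$ and using the potential \eqref{potentialL} together with Tonelli's theorem to exchange the integrals,
\[
\widetilde{\phi_k}(\lambda)=\frac{1}{k!}\int_0^\infty x^k\,\frac{\Phi(\lambda)}{\lambda}\,e^{-x\Phi(\lambda)}\,dx=\frac{\Phi(\lambda)}{\lambda}\cdot\frac{1}{\Phi(\lambda)^{k+1}}=\frac{1}{\lambda\,\Phi(\lambda)^{k}},
\]
where I used $\int_0^\infty x^k e^{-x\Phi(\lambda)}\,dx=k!\,\Phi(\lambda)^{-(k+1)}$ and that $\Phi(\lambda)>0$ for $\lambda>0$ under \eqref{assumptionPhi}. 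In particular this is consistent with $\phi_0\equiv 1$, whose transform is $1/\lambda$.

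Next I would feed this into the defining relation of $\mathfrak{D}^\Phi_t$. Since $L_0=0$ we have $\phi_k(0)=0$ for every $k\geq 1$, hence
\[
\int_0^\infty e^{-\lambda t}\,\mathfrak{D}^\Phi_t\phi_k(t)\,dt=\Phi(\lambda)\,\widetilde{\phi_k}(\lambda)-\frac{\Phi(\lambda)}{\lambda}\,\phi_k(0)=\frac{\Phi(\lambda)}{\lambda\,\Phi(\lambda)^{k}}=\frac{1}{\lambda\,\Phi(\lambda)^{k-1}}=\widetilde{\phi_{k-1}}(\lambda),
\]
valid for all $k\in\mathbb{N}$ (for $k=1$ the right-hand side is $1/\lambda=\widetilde{\phi_0}(\lambda)$). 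By uniqueness of the inverse Laplace transform this yields the pointwise identity $\mathfrak{D}^\Phi_t\phi_k(t)=\phi_{k-1}(t)$ for all $t>0$.

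The only genuine point requiring care is that the above manipulation is licit, i.e. that $\phi_k\in\mathcal{M}_w$ for some $w\geq 0$ so that $\mathfrak{D}^\Phi_t\phi_k$ is defined and the transform is absolutely convergent: continuity of $t\mapsto\phi_k(t)$ follows from the continuity of $L$ together with monotone/dominated convergence, while the exponential bound $|\phi_k(t)|\le M e^{wt}$ follows from standard tail estimates for the moments of $L_t$ (and is already forced by $\widetilde{\phi_k}(\lambda)=1/(\lambda\Phi(\lambda)^k)$ being finite for every $\lambda>0$ under \eqref{assumptionPhi}). Alternatively, one can avoid transforms altogether and argue pointwise: by \eqref{connection}, $\mathfrak{D}^\Phi_t\phi_k=\mathcal{D}^\Phi_t\phi_k$ for $k\geq 1$, and differentiating under the integral sign in \eqref{eqL} gives $\mathcal{D}^\Phi_t\phi_k(t)=-\frac{1}{k!}\int_0^\infty x^k\,\partial_x l(t,x)\,dx=\frac{1}{(k-1)!}\int_0^\infty x^{k-1} l(t,x)\,dx=\phi_{k-1}(t)$ after an integration by parts, the boundary terms vanishing since $x^k l(t,0)=0$ for $k\geq 1$ and $x^k l(t,x)\to 0$ as $x\to\infty$. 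I expect this integrability/regularity bookkeeping, rather than the algebra, to be the main obstacle.
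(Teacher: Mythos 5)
Your argument is exactly the paper's: compute $\widetilde{\phi_k}(\lambda)=1/(\lambda\,\Phi(\lambda)^k)$ from the potential \eqref{potentialL}, plug it into the defining Laplace relation for $\mathfrak{D}^\Phi_t$ using $\phi_k(0)=0$, and identify the result with $\widetilde{\phi_{k-1}}$. The proposal is correct and coincides with the paper's proof, with your added regularity remarks being reasonable but not part of the original.
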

\begin{proof}
It follows immediately from the fact that (see formula \eqref{potentialL})
\begin{align*}
\int_0^\infty e^{-\lambda t} \int_0^\infty x^k\, l(t,x)\, dx\, dt = \frac{\Phi(\lambda)}{\lambda} \int_0^\infty x^k\, e^{-x \Phi(\lambda)}\, dx = \frac{\Phi(\lambda)}{\lambda} \frac{\Gamma(k+1)}{( \Phi(\lambda))^{k+1}}
\end{align*}
from which we get the Laplace transform 
\begin{align*}
\int_0^\infty e^{-\lambda t} \phi_k(t)\, dt = \frac{1}{\lambda\, (\Phi(\lambda))^k}.
\end{align*}
This formula has been obtained in \cite{VeilletteTaqqu2010}. From \eqref{PhiConv} and the fact that $\phi_k(0) = 0$, we write
\begin{align*}
\Phi(\lambda) \frac{1}{\lambda\, (\Phi(\lambda))^k} - \frac{\Phi(\lambda)}{\lambda} \phi_k(0) = \frac{1}{\lambda\, (\Phi(\lambda))^{k-1}}.
\end{align*}
We conclude the proof.
\end{proof}

%
%
%
%
%
%

We observe that for the sequence $E^\Phi_k = (-a)^k$, $k \geq 0$, $a>0$ we have that
\begin{align}
\label{unknownU}
\bar{u}(t) = \sum_{k \geq 0} \frac{(-a)^k}{k!} \mathbf{E}_0[(L_t)^k] = \mathbf{E}_0[e^{-a L_t}]
\end{align} 
where $L_t$ is an inverse to a subordinator with symbol $\Phi$. Such a representation holds for $t\geq 0$. Moreover, from Lemma \ref{lemmaphi} we are able to conclude that
\begin{align*}
\mathfrak{D}^\Phi_t \bar{u}(t) = \sum_{k \geq 0} (-a)^{k+1} \, \phi_k(t) = - a\, \bar{u}(t).
\end{align*}
This means that $\bar{u}(t) = \mathbf{E}_0[v(L_t)]$ where $v^\prime = -a v$. That is, the case in Theorem \ref{thm:linf}. 

Concerning the extension of the result in Lemma \ref{lemmaStable} to a general symbol $\Phi$, our conjecture is as follows: {\it There exists a sequence $\{E^\Phi_k\}_k$ such that 
\begin{align*}
\mathfrak{D}^\Phi_t \bar{u}(t) = f(\bar{u}(t)), \quad t \in (0, r_\Phi)
\end{align*}
for some $r_{\Phi}>0$. Moreover, 
\begin{align*}
\bar{u}(t) = \mathbf{E}_0[v(L_t); \, t < H_{r_\Phi}]
\end{align*}
where $H$ is a subordinator and $L$ is the inverse of $H$. 
}

The function \eqref{unknownU} has been also studied in \cite{Koc2011} for complete Bernstein functions and \cite{MT2019} for special Bernstein functions. It has been considered also in \cite{BucSak2019} in connection with the Poisson and Skellam processes. Moreover, the case $E^\Phi_k=a^k$ has been also studied in \cite{KocKon2019} for complete Bernstein functions and in \cite{Ascione} in the general case. As far as we know for the moments of $L_t$ and the function \eqref{unknownU} we do not have an explicit representation. Actually, this still is an open problem.

In conclusion, the above conjecture would extend the non-local logistic equation associated with \eqref{vLog} to a general $\Phi$.

\begin{figure}[tb]
\includegraphics[scale=.5]{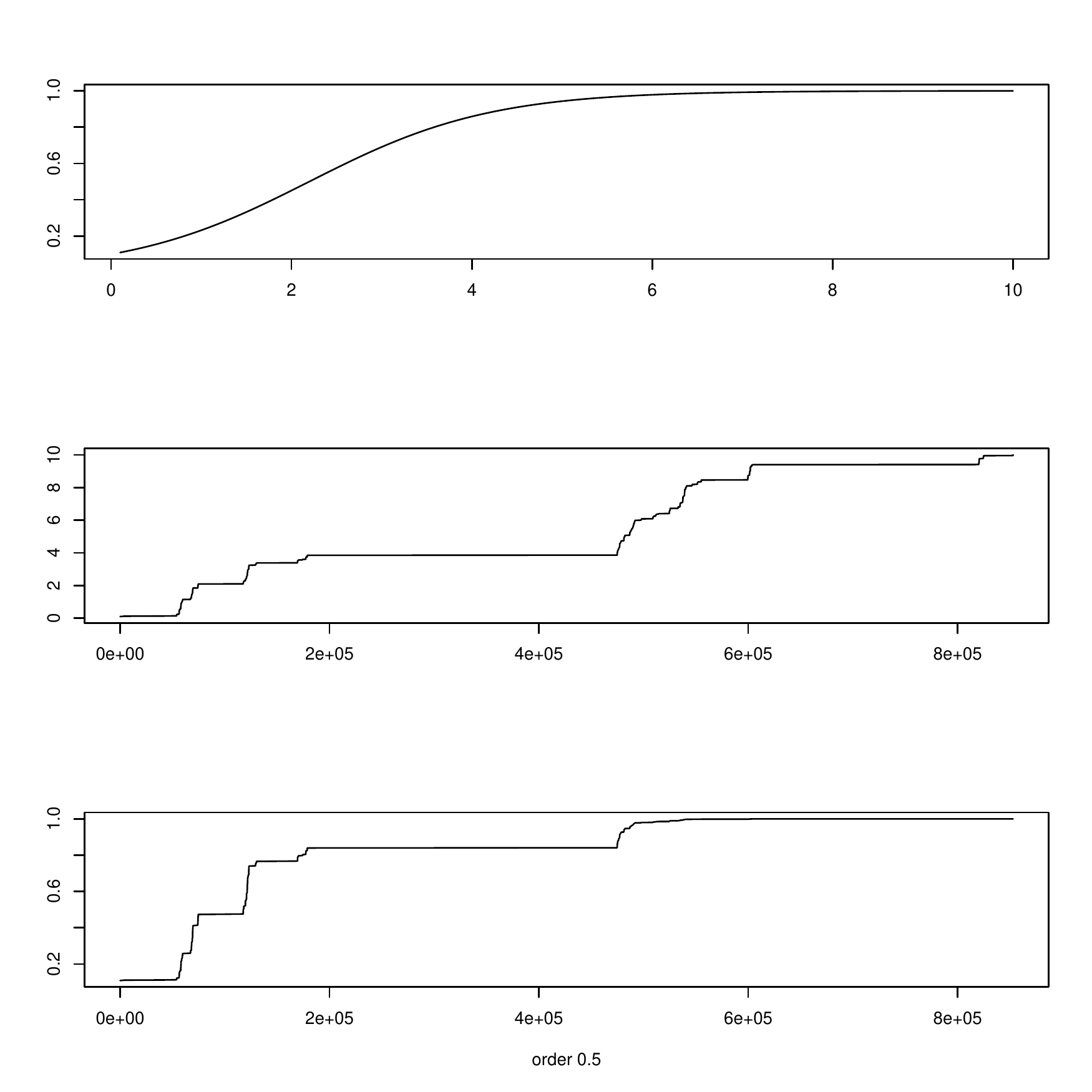}
\caption{Top picture) the function $(0,10) \ni t \to v(t) \in (0,1)$ given in formula \eqref{logv} with $v_0=0.1$; Picture in the middle) a realization of $L_t$, that is a continuous function from $(0,T)$ to $(0,10)$ where $T=8\mathrm{e}$+05. Here $L_t$ is the inverse to a stable subordinator with $\alpha=0.5$; Bottom picture) the composition $v(L_t) : (0,T) \to (0,1)$. The last picture shows that $L_t$ "delays" the profile of $v(t)$. Indeed, due to the plateaux of the new time $L_t$, from the function $v(t)$, $t \in (0, 10)$ we obtain the random function $v(L_t)$, $t \in (0, T)$ where $T>>10$. This can be regarded as a delaying effect of $L_t$ on the growth $v(t)$.}
\label{figDR}
\end{figure}



\begin{thebibliography}{10}

\bibitem{AN2021}
I. Area, J.J. Nieto,
\textit{Power series solution of the fractional logistic equation,}
Physica A: Statistical Mechanics and its Applications,
\textit{573}, 2021, 125947

\bibitem{ALN16} 
{I. Area, J. Losada, J. J. Nieto,}
\textit{A note on the fractional logistic equation,}
Physica A: Statistical Mechanics and its Applications, \textit{444} (2016) 182 - 187.

\bibitem{Ascione}
{G. Ascione.} 
\textit{Abstract Cauchy problems for generalized fractional calculus.} Nonlinear Analysis,  \textit{209}, August 2021, 112339

\bibitem{BM2001}
{B. Baeumer, M. M. Meerschaert,}  
\textit{Stochastic solutions for fractional Cauchy problems. }
Fractional Calculus and Applied Analysis, \textit{4}(4):481-500, 2001.

\bibitem{BerBook} 
{J. Bertoin,} 
Subordinators: Examples and Applications.
In: Bernard P. (eds) Lectures on Probability Theory and Statistics. Lecture Notes in Mathematics, vol 1717. Springer, Berlin, Heidelberg, 1999.

\bibitem{Bingham1971}
{N. H. Bingham.} 
\textit{Limit theorems for occupation times of markov processes.} Zeitschrift fur Wahrscheinlichkeitstheorie und verwandte Gebiete, \textit{17}(1):1-22, 1971.

\bibitem{BucSak2019}
{K. Buchak, L. Sakhno.} 
\textit{On the governing equations for Poisson and Skellam processes time-changed by inverse subordinators.} 
Theor. Probability and Math. Statist. 98 (2019), 91-104

\bibitem{CapDovDR} 
{R. Capitanelli, M. D'Ovidio,}
\textit{Delayed and Rushed motions through time change.}
ALEA, Lat. Am. J. Probab. Math. Stat. \textit{17} (2020), 183-204.

\bibitem{Chen2017}
{Z.-Q. Chen.} 
\textit{Time fractional equations and probabilistic representation.}
Chaos, Solitons \& Fractals, 102:168-174, 2017.

\bibitem{DCP}
{A. Di Crescenzo, P. Paraggio}
\textit{Logistic Growth Described by Birth-Death and Diffusion Processes,} 
Mathematics \textbf{7} (2019), Pag.1-28

\bibitem{DCDC2021}
A. Dom\'{e}nech-Carb\'{o}, C. Dom\'{e}nech-Casas\'{u}s,
\textit{The evolution of COVID-19: A discontinuous approach,}
Physica A: Statistical Mechanics and its Applications,
\textit{568}, 2021, 125752,

\bibitem{DOVspl2011}
{M. D'Ovidio}
\textit{On the fractional counterpart of the higher-order equations.}
Statistics and Probability Letters, \textit{81}, (2011), 1929 - 1939

\bibitem{PhysA} 
{M. D'Ovidio, P. Loreti,}
\textit{Solutions of fractional logistic equations by Euler's numbers.} 
Physica A: Statistical Mechanics and its Applications, \textbf{506} (2018) 1081 - 1092.

\bibitem{Izadi2020}
{M. Izadi.} 
\textit{A Comparative Study of Two Legendre-Collocation Schemes Applied to Fractional Logistic Equation.} Int. J. Appl. Comput. Math (2020) 6:71.

\bibitem{IzaSri2020}
M. Izadi, H.M. Srivastava, 
\textit{A Discretization Approach for the Nonlinear Fractional Logistic Equation.} 
Entropy 2020, 22, 1328.

\bibitem{KahPhaJam2020}
L.N. Kaharuddin, C. Phang, S.S Jamaian, 
\textit{Solution to the fractional logistic equation by modified Eulerian numbers.}
Eur. Phys. J. Plus \textit{135}, 229 (2020)

\bibitem{KumRaj2020}
A. Kumar,  Rajeev,
\textit{A moving boundary problem with space-fractional diffusion logistic population model and density-dependent dispersal rate,}
Applied Mathematical Modelling, \textit{88}, 2020, 951-965.

\bibitem{Koc2011}
{A. N. Kochubei.} 
\textit{General fractional calculus, evolution equations, and renewal processes.} 
Integral Equations and Operator Theory, \textit{71}(4):583-600, 2011.

\bibitem{KocKon2019}
{A. N. Kochubei and Y. Kondratiev.} 
\textit{Growth equation of the general fractional calculus.} Mathematics, \textit{7}(7):615, 2019.

\bibitem{FCAAlog} 
{M. D'Ovidio, P. Loreti, Sima Sarv Ahrabi,}
\textit{Modified Fractional Logistic Equation.}
Physica A: Statistical Mechanics and its Applications, \textit{505} (2018) 818 - 824.

\bibitem{MS2008}
{M. M. Meerschaert, H.-P. Scheffler.} 
\textit{Triangular array limits for continuous time random walks.} Stochastic processes and their applications, \textit{118}(9):1606-1633, 2008.

\bibitem{MS2013}
{M. M. Meerschaert, P. Straka.} 
\textit{Inverse stable subordinators.} 
Mathematical modelling of natural phenomena, \textit{8}(2):1-16, 2013.

\bibitem{MT2019}
{M. M. Meerschaert and B. Toaldo.} 
\textit{Relaxation patterns and semi-markov
dynamics.} 
Stochastic Processes and their Applications, \textit{129}(8):285-2879, 2019.

\bibitem{OB17} {M. Ortigueira, G. Bengochea,}
\textit{A new look at the fractionalization of the logistic equation}, Physica A: Statistical Mechanics and its Applications \textbf{467} (2017) 554-561.

\bibitem{SC2003}
{S.G. Samko, R.P. Cardoso,} 
\textrm{Integral equations of the first kind of Sonine
type.} Int. J. Math. Math. Sci. \textit{57}, 3609-3632 (2003)


\bibitem{SchilingBook}
{R. L. Schilling, R. Song, Z. Vondracek.} 
Bernstein functions: theory and applications, volume 37. Walter de Gruyter, 2012.

\bibitem{Toaldo2015}
{B. Toaldo.} 
\textit{Convolution-type derivatives, hitting-times of subordinators and time-changed $C_0$-semigroups.} 
Potential Analysis, \textit{42}(1):115-140, 2015.

\bibitem{VeilletteTaqqu2010}
{M. Veillette and M. S. Taqqu,} 
\textit{Using differential equations to obtain joint moments of first-passage times of increasing L\`{e}vy processes.} 
Statistics \& probability letters, \textit{80}(7-8):697-705, 2010.

\bibitem{West}
{B.J. West,}
\textit{Exact solution to fractional logistic equation}, 
Physica A: Statistical Mechanics and its Applications,  \textbf{429} (2015) 103-108.

\end{thebibliography}

\end{document}